\newcommand\mbb{\mathbb}
\newcommand\mcal{\mathcal}
\newcommand\sF{\mcal{F}}
\newcommand\sH{\mcal{H}}
\newcommand\sM{\mcal{M}}
\newcommand\sR{\mcal{R}}
\newcommand\sV{\mcal{V}}
\newcommand\C{\mbb{C}}
\renewcommand\P{\mbb{P}}
\newcommand\R{\mbb{R}}
\DeclareMathOperator\SL{SL}
\DeclareMathOperator\interior{int}
\renewcommand\epsilon{\varepsilon}
\renewcommand\ge{\geqslant}
\renewcommand\le{\leqslant}
\renewcommand\phi{\varphi}
\renewcommand\theta{\vartheta}
\DeclareMathOperator\Diag{Diag}
\DeclareMathOperator\Disc{Disc}
\DeclareMathOperator\Sym{Sym}
\newcommand\randN{N}
\newcommand\randF{F}
\theoremstyle{plain}
\newtheorem{Thm}{Theorem}
\newtheorem{Prop}[Thm]{Proposition}
\newtheorem{Cor}[Thm]{Corollary}
\newtheorem{Conjecture}[Thm]{Conjecture}
\newtheorem*{Thm*}{Theorem}
\newtheorem*{Prop*}{Proposition}
\newtheorem*{Cor*}{Corollary}
\newtheorem*{Lemma*}{Lemma}
\newtheorem*{Sublemma*}{Sublemma}
\newtheorem*{Conjecture*}{Conjecture}
\theoremstyle{definition}
\newtheorem{Example}[Thm]{Example}
\newtheorem{Remark}[Thm]{Remark}
\newtheorem*{Constr*}{Construction}
\newtheorem*{Def*}{Definition}
\newtheorem*{Defs*}{Definitions}
\newtheorem*{Example*}{Example}
\newtheorem*{Examples*}{Examples}
\newtheorem*{Exercise*}{Exercise}
\newtheorem*{LemmaDef*}{Lemma and Definition}
\newtheorem*{Notation*}{Notation}
\newtheorem*{Problem*}{Problem}
\newtheorem*{Question*}{Question}
\newtheorem*{Remark*}{Remark}
\newtheorem*{Remarks*}{Remarks}
\newtheorem*{Warning*}{Warning}
\newtheorem*{Text*}{}
\numberwithin{equation}{section}
\numberwithin{Thm}{section}
\DeclareMathOperator\Orth{O}
\begin{document}
\title[Determinantal representations via homotopy continuation]{Determinantal representations of hyperbolic curves via polynomial homotopy continuation}

\author{Anton Leykin}
\address{School of Mathematics, Georgia Institute of Technology,
  Atlanta, GA, USA}
\email{leykin@math.gatech.edu}
\thanks{AL was supported by NSF grant DMS-1151297}

\author{Daniel Plaumann}
\address{Fachbereich Mathematik und Statistik, Universit\"at Konstanz,
  Germany}
\email{daniel.plaumann@uni-konstanz.de}

\date{\today}

\begin{abstract}
  A smooth curve of degree $d$ in the real projective plane is
  hyperbolic if its ovals are maximally nested, i.e.~its real points
  contain $\lfloor\frac d2\rfloor$ nested ovals. By the
  Helton-Vinnikov Theorem, any such curve admits a definite symmetric
  determinantal representation. We use polynomial homotopy
  continuation to compute such representations numerically. Our method
  works by lifting paths from the space of hyperbolic polynomials to a
  branched cover in the space of pairs of symmetric matrices.
\end{abstract}

\maketitle

\section*{Introduction}

Let $p\in\C[t,x,y]$ be a homogeneous polynomial of degree $d\ge 1$. A
\emph{(linear symmetric) determinantal representation} of $p$ is an expression
\[
p=\det(tM_1+xM_2+yM_3)
\]
where $M_1,M_2,M_3$ are complex symmetric matrices of size $d\times
d$.  Determinantal representations of plane curves are a classical
topic of algebraic geometry. Existence for smooth curves of arbitrary
degree was first proved by Dixon in 1902 \cite{Di}. For an exposition
in modern language, see Beauville \cite{Be}.

Real determinantal representations of real curves have only been
studied systematically much later in the work of Dubrovin \cite{Du}
and Vinnikov \cite{Vi89}. Of particular interest here are the
\emph{definite representations}, where some linear combination of the
matrices $M_1,M_2,M_3$ is positive definite. By a celebrated result
due to Helton and Vinnikov \cite{HV}, these correspond exactly to the \emph{hyperbolic curves},
whose real points consist of maximally nested ovals in the real
projective plane.

The Helton-Vinnikov theorem (previously known as the Lax Conjecture)
has attracted attention in connection with semidefinite programming,
since it characterizes the boundary of those convex subsets of the real plane that can
be described by linear matrix inequalities. See Vinnikov \cite{Vi11}
for an excellent survey.

While the Helton-Vinnikov theorem ensures the existence of a definite
determinantal representation for any hyperbolic curve, finding such a
representation for a given polynomial $p$ remains a difficult
computational problem. With a suitable choice of coordinates, we can
restrict to representations of the form
\[
p=\det(tI_d+xD+yR)
\]
where $I_d$ is the identity matrix, $D$ is a real diagonal and $R$ a
real symmetric matrix. The hyperbolicity of $p$ is reflected in the
fact that for any point $(u,v)\in\R^2$, all roots of the univariate
polynomial $p(t,u,v)\in\R[t]$ are real. Given such $p$, the
computational task of finding the unknown entries of $D$ and $R$
leads, in general, to a zero-dimensional system of polynomial
equations. However, as $d$ grows, this direct approach quickly becomes
infeasible in practice. This, as well as symbolic methods and an
alternative approach via theta functions based on the proof of the
Helton-Vinnikov theorem, have been investigated in \cite{PSV}. As far
as actual computations are concerned, $d=6$ was the largest degree for
which computations terminated in reasonable time.

We present here a more sophisticated numerical approach,
implemented with \textsc{NAG4M2}: the \emph{NumericalAlgebraicGeometry}
package~\cite{Leykin:NAG4M2} for \textsc{Macaulay2}~\cite{M2www}. 
We consider the branched cover of the space of
homogeneous polynomials by pairs of matrices $(D,R)$, with $D$
diagonal and $R$ symmetric, via the determinantal map 
$$(D,R)\mapsto \det(tI_d+xD+yR).$$ 
We use known results on the number of equivalence
classes of complex determinantal representations to show that the
determinantal map is unramified over the set of smooth hyperbolic
polynomials (Thm.~\ref{Thm:U}). We then use the fact that this set is
path-connected. In fact, an explicit path connecting any hyperbolic
polynomial to a certain fixed polynomial was constructed by Nuij in
\cite{Nu}, which we refer to as the N-path. Our algorithm works by
constructing a lifting of the N-path to the covering space. The
advantage over an application of a blackbox homotopy continuation solver 
to the zero-dimensional system of equations is that we need to track a {\em single}
path instead of as many paths as there are complex solutions. We also recover the
Helton-Vinnikov theorem from these topological considerations and the
count of equivalence classes of complex representations.

Since the singular locus has codimension at least $2$ inside the set
of strictly hyperbolic polynomials, the N-path avoids singularities
for almost all starting polynomials (Prop. \ref{Prop: avoid
  L}). In the unlikely event that the N-path goes through the
singular locus, it is possible to perturb the starting point and
obtain an approximate determinantal representation. We also provide an
algorithm that produces a complex determinantal representation via the
complexification of the N-path.

We modify the approach of Nuij to introduce a {\em randomized N-path},
which depends on a choice of random linear forms. For a given starting
polynomial $p$, we conjecture (Conjecture~\ref{conjecture:main}) that the randomized N-path avoids the
singular locus with probability 1. This also results in a better
practical complexity of the computation than the original N-path
(Remark~\ref{Remark:N-path-comparison}).

Our proof-of-concept implementation is written in the top-level interpreted language of
{\sc Macaulay2} and, by default, uses standard double
floating point precision. Even with these limitations we can compute 
small examples in reasonable time (see the example in \S\ref{section:example} for $d=6$): we are able to finish examples with $d\leq 10$ within one day. With arbitrary precision
arithmetic and speeding up the numerical evaluation procedure, we see
no obstacles to computing robustly for $d$ in double digits using the
present-day hardware.

We note that the developed method constructs an {\em intrinsically real} homotopy to find real solutions to a (specially structured) polynomial system. The only other intrinsically real homotopy known to us is introduced for Khovanskii-Rolle continuation in~\cite{BS}. 

\emph{Acknowledgements.} We are grateful to Institut Mittag-Leffler,
where this project has started, for hosting us in the Spring of
2011. We would like to thank Greg Blekherman and Victor Vinnikov for helpful discussions.

\section{Hyperbolic and determinantal polynomials}
\noindent We consider real or complex homogeneous polynomials of degree $d\ge 1$ in $n+1$ variables
$(t,x)$, $x=x_1,\dots,x_n$. Let
\begin{align*}
\sF &=\bigl\{p\in\C[t,x]\:|\: p\text{ is homogeneous of total degree }d\text{
  and }p(1,0,\dots,0)=1\bigr\}\\
\sF_\R &= \sF\cap\R[t,x].
\end{align*}

A polynomial $p\in \sF_\R$ is called \emph{hyperbolic} if all roots of
the univariate polynomial $p(t,u)\in\R[t]$ are real, for all
$u\in\R^n$. It is called \emph{strictly hyperbolic} if all these roots
are distinct, for all $u\in\R^n$, $u\neq 0$.  Write
\[
\sH =\bigl\{p\in \sF_\R\:|\: p\text{ is hyperbolic}\bigr\}.\\
\]

\begin{Prop}\label{Prop:TopologyH}\mbox{}
  \begin{enumerate}
  \item The interior $\interior(\sH)$ of $\sH$ is the set of strictly hyperbolic polynomials
and $\sH$ is the closure of $\interior(\sH)$ in $\sF_\R$.
   \item The set $\interior(\sH)$ is contractible and path-connected
     (hence so is $\sH$).
 \item A polynomial $f\in\sH$ is strictly hyperbolic if and only if the
    projective variety $\sV_\C(f)$ defined by $f$ has no real singular
    points.
  \item Let $\sH^\circ$ be the set of hyperbolic polynomials $p\in\sH$ for
    which $\sV_\C(p)$ is smooth. Then $\interior(\sH)\setminus\sH^\circ$ has codimension at
    least $2$ in $\sF_\R$.
  \end{enumerate}
\end{Prop}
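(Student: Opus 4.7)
My plan is to handle the four parts in order using a mix of root-perturbation arguments, Nuij's deformation, a local singularity analysis, and a real-dimension count.

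For parts (1) and (2), I combine elementary openness/closedness observations with Nuij's construction from \cite{Nu}. Strict hyperbolicity is open because simple real roots of $p(t,u)$ depend continuously on $(p,u)$; hyperbolicity is closed because a limit of real numbers is real. Thus $\{\text{strictly hyperbolic}\} \subseteq \interior(\sH)$. Conversely, a hyperbolic $p$ with a multiple real root of $p(\cdot, u_0)$ lies on the boundary of $\sH$ by the same local perturbation argument used in part (3) below, since a generic perturbation splits the multiple root into a non-real conjugate pair. Density of the strictly hyperbolic locus, and both path-connectedness and contractibility in (2), follow from Nuij's explicit path: his operator $p \mapsto p + \varepsilon \ell \partial_t p$ for a generic linear form $\ell$, applied iteratively, gives via continuous deformation a strong deformation retract of $\interior(\sH)$ onto a fixed strictly hyperbolic polynomial such as $\prod_{i=1}^d(t+ix_1)$.

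For part (3), one direction is immediate: strict hyperbolicity rules out real singularities because a real singular point $(t_0, u_0)$ with $u_0 \ne 0$ (forced by $p(1,0)=1$) satisfies $p(t_0, u_0) = \partial_t p(t_0, u_0) = 0$, making $t_0$ a multiple real root of $p(\cdot, u_0)$. Conversely, if $p \in \sH$ has a multiple real root $t_0$ of $p(\cdot, u_0)$, I need to show every $\partial_{x_i} p(t_0, u_0)$ vanishes. The idea is to restrict to the affine line $u_0 + sv$ for $v = e_i$: if some $\partial_{x_i} p(t_0, u_0) \ne 0$, a Newton-polygon/Puiseux expansion at $(t_0, 0)$ shows that for one sign of $s$ the multiple root splits into at least one complex conjugate pair, contradicting hyperbolicity.

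For part (4), by (3) every $p \in \interior(\sH) \setminus \sH^\circ$ is strictly hyperbolic with some non-real singular point $[z] \in \C\P^n$ of $\sV_\C(p)$. I bound the image of the incidence variety
\[
I = \bigl\{ (p, [z]) \in \sF_\R \times (\C\P^n \setminus \R\P^n) \,:\, \nabla p(z) = 0 \bigr\}
\]
under the first projection. For fixed non-real $[z]$, the condition $\nabla p(z) = 0$ consists of $n+1$ complex-linear equations on $p$; for real $p$ these decompose into $2(n+1)$ real-linear conditions that are generically independent (since $z$ and $\bar z$ are distinct). Because $\C\P^n \setminus \R\P^n$ has real dimension $2n$, this yields $\dim_\R I \le \dim_\R \sF_\R - 2$, whence the image in $\sF_\R$ has real codimension at least $2$. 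The most delicate step I anticipate is the local analysis in part (3) when $t_0$ has multiplicity $\ge 3$: the naive quadratic expansion fails, and a careful Newton-polygon argument is needed to show that hyperbolicity forces all transverse partials to vanish.
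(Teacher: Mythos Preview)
Your sketch is correct and substantially more detailed than the paper's own proof, which for (1)--(3) simply cites Nuij \cite{Nu} and \cite[Lemma~2.4]{PV}, and for (4) says only that elements of $\interior(\sH)$ have no real singularities while non-real singularities of a real polynomial come in conjugate pairs. Your arguments are essentially the content of those references: the Newton-polygon/Puiseux analysis you outline in (3) is the standard proof of that lemma, and your incidence-variety dimension count in (4) is a clean way to make the paper's one-line remark precise (the conjugate-pair observation is exactly what forces the $2(n+1)$ real conditions, against a $2n$-dimensional parameter space for $[z]$).

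Two small corrections to your account of Nuij's construction, neither of which affects the conclusion. First, the contraction $N_s$ is \emph{not} a strong deformation retract---the paper itself remarks later that $N_s(t^d)\neq t^d$ for $0<s<1$---though a null-homotopy of the identity is all that contractibility requires. Second, your proposed endpoint $\prod_{i=1}^d(t+ix_1)$ is not strictly hyperbolic when $n\ge 2$: at any $u$ with $u_1=0$ it degenerates to $t^d$. Nuij's actual endpoint $F_1(t^d)=(T_1^{x_1})^d\cdots(T_1^{x_n})^d(t^d)$ involves all of $x_1,\dots,x_n$ and does lie in $\interior(\sH)$.
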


\begin{proof}
  (1) and (2) are proved by Nuij \cite{Nu} (see also Section
  \ref{Sec:N-path} below). (3) is proved in \cite[Lemma 2.4]{PV}. (4)
  follows from the fact that the elements of $\interior(\sH)$ have no
  real singularities, while complex singularities must come in
  conjugate pairs.
\end{proof}

For the remainder of this section, we restrict to the case $n=2$
(plane projective curves) and use $(x,y)$ instead of $(x_1,x_2)$.

\begin{Remark}\label{Remark:StrictlyHyperbolicSingular}
  If $n=2$ and $d\le 3$, then $\sH^\circ=\interior(\sH)$, i.e.~every
  strictly hyperbolic curve of degree at most $3$ is smooth. This is
  simply because a real plane curve of degree at most $3$ cannot
  have any non-real singularities. 

  When $d\ge 4$, a strictly hyperbolic curve may still have complex
  singularities. For example, let $p=1/19(19t^4 -31 x^2t^2 - 86 y^2t^2
  + 9x^4 + 41 x^2 y^2 +39 y^4)$. One can check that $p$ is hyperbolic
  and that the projective plane curve defined by $p$ has no real
  singularities, hence $p$ is strictly hyperbolic. However, $(1:\pm
  2:\pm i)$ are two pairs of complex-conjugate singular points of
  $\sV_\C(p)$. Thus $p\in\interior(\sH)\setminus\sH^\circ$.
\end{Remark}

\noindent We will use the notation
\begin{align*}
\sM&=\bigl\{(D,R)\in\bigl(\Sym_d(\C))^2\:|\: D\text{ is
  diagonal}\bigr\},\\
\sM_\R&=\sM\cap(\Sym(\R))^2.
\end{align*}
Note that, since $n=2$, we have $\dim_\C\sF=\dim_\R\sF_\R=\dim_\R\sM_\R=\dim_\C\sM=\frac{d(d+3)}{2}$. We study the map
\[
\Phi\colon\left\{
  \begin{array}{ccc}
    \sM & \to & \sF\\
    (D,R) & \mapsto & \det(tI_d+xD+yR).
  \end{array}\right.
\]
and its restriction to $\sM_\R$.

The image of $\sM_\R$ under $\Phi$ is contained in $\sH$. It is also not hard
to show that it is closed (see \cite[Lemma 3.4]{PV}).
Our first goal is to find a connected open subset $U$ of
$\sH$ such that the restriction of $\Phi$ to $\Phi^{-1}(U)$ is smooth.

For fixed $p\in\sH$, the group
$\SL_d(\C)\times\{\pm 1\}$ acts on the determinantal representations
$p=\det(tM_1+xM_2+yM_3)$ via symmetric
equivalence. In other words, any $A\in\SL_d(\C)\times\{\pm 1\}$ gives a new
representation $p=\det(tAM_1A^T+xAM_2A^T+yAM_3A^T)$. When we restrict
to the normalized representations we are considering, we have an
action on pairs $(D,R)\in\Phi^{-1}(p)$ by those elements
$A\in\SL_d(\C)\times\{\pm 1\}$
for which $AA^T=I_d$ (i.e.~$A\in\Orth_d(\C)$) and $ADA^T$ is
diagonal.

\begin{Thm}
  For $n=2$, any $p\in\sF$ has only finitely many complex representations
  $p=\det(tI_d+xD+yR)$ up to symmetric equivalence. If the curve
  $\sV_\C(p)$ is smooth, the number of equivalence classes is precisely
  $2^{g-1}\cdot(2^g+1)$, where $g=\binom{d-1}{2}$ is the genus of
  $\sV_\C(p)$.
\end{Thm}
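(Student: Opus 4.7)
The plan is to reduce the enumeration to the classical count of theta characteristics via the Dixon-Beauville correspondence between symmetric determinantal representations of a smooth plane curve and ineffective theta characteristics on it.

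\textbf{Normalization and finiteness.} Given any symmetric representation $p=\det(tM_1+xM_2+yM_3)$, the condition $p(1,0,0)=1$ forces $M_1$ to be invertible. I would write $M_1=BB^T$ with $B\in\GL_d(\C)$ (possible since any symmetric invertible complex matrix has such a factorization) and conjugate by $B^{-1}$ to reduce to $M_1=I_d$; a further orthogonal conjugation diagonalizes $M_2$. The residual stabilizer in $\Orth_d(\C)$ of the pair $(I_d,\text{generic }D)$ is the finite group of signed permutations, so equivalence classes correspond to orbits of $\Phi^{-1}(p)$ under this finite group, and it suffices to show $\Phi^{-1}(p)$ is finite. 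Since $\dim_\C\sM=\dim_\C\sF=\frac{d(d+3)}{2}$, the map $\Phi$ is generically finite, which I would verify by computing $\Phi^{-1}(p)$ for $p=\prod_i(t-\alpha_i x-\beta_i y)$ with generic $(\alpha_i,\beta_i)$, where the fiber consists of the $d!$ orderings. Finiteness for \emph{every} nonempty fiber follows from the deeper fact that any $p\in\sF$ admits only finitely many symmetric determinantal representations up to equivalence, which is part of the classical theory invoked next.

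\textbf{Count for smooth $p$.} When $C=\sV_\C(p)$ is smooth of genus $g=\binom{d-1}{2}$, I would invoke Beauville's theorem \cite{Be}, giving a bijection
\[
\{\text{sym.\ det.\ representations of } p\}/{\sim}\ \longleftrightarrow\ \{\theta\in\mathrm{Pic}^{g-1}(C)\mid \theta^{\otimes 2}\isom K_C,\ h^0(\theta)=0\},
\]
under which $M=tM_1+xM_2+yM_3$ corresponds to the cokernel of $M\colon\sO_{\P^2}(-1)^d\to\sO_{\P^2}^d$; this cokernel is supported on $C$ and is a theta characteristic with vanishing cohomology by Serre duality on $\P^2$. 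The right-hand side is then enumerated by classical theta theory: $\mathrm{Jac}(C)[2]\isom(\Z/2\Z)^{2g}$ carries a nondegenerate quadratic form whose Arf invariant partitions the $2^{2g}$ theta characteristics into $2^{g-1}(2^g+1)$ even and $2^{g-1}(2^g-1)$ odd ones. For smooth plane curves, every ineffective theta characteristic is even and, conversely, every even theta characteristic is ineffective, which yields the count $2^{g-1}(2^g+1)$.

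\textbf{Main obstacle.} The most delicate ingredient is the final identification of ineffective with even theta characteristics on \emph{every} smooth plane curve. This coincidence fails for general curves (which may carry vanishing theta nulls, i.e.\ even $\theta$ with $h^0(\theta)\ge 2$) and is specific to planar embeddings, arising from the constraints that the $g^2_d$ cut out by lines imposes on special linear series on $C$. Granting this identification together with Beauville's bijection, the count of even theta characteristics via the Arf invariant immediately produces the asserted number $2^{g-1}(2^g+1)$; the finiteness assertion for arbitrary $p\in\sF$ falls out of the same correspondence, extended suitably to singular curves using generalized theta characteristics.
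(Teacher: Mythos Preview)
Your approach is the same as the paper's: reduce to the Dixon--Beauville correspondence between equivalence classes of symmetric determinantal representations and ineffective theta characteristics, then count. The paper's proof is a one-line citation of this bijection (to \cite{PSV}); you have unpacked it and, to your credit, correctly isolated the step on which the exact count hinges: one must identify the ineffective theta characteristics with the even ones, since it is only the latter that number $2^{g-1}(2^g+1)$.

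That identification, however, is false as you have stated it. It is \emph{not} true that every even theta characteristic on a smooth plane curve is ineffective. Take $d=7$: for any smooth plane septic $C$ one has $K_C=\sO_C(4)$, so $\sO_C(2)$ is a theta characteristic, and the sequence $0\to\sO_{\P^2}(-5)\to\sO_{\P^2}(2)\to\sO_C(2)\to 0$ gives $h^0(\sO_C(2))=h^0(\P^2,\sO(2))=6$, which is even and positive. Thus every smooth plane septic carries a vanishing theta null, and the ineffective theta characteristics are strictly fewer than $2^{g-1}(2^g+1)$. The equality you want holds only off the theta-null locus, i.e.\ for a \emph{generic} smooth plane curve of each degree, not for all of them. (The paper's formulation inherits the same imprecision: ``precisely $2^{g-1}(2^g+1)$'' should read ``at most $2^{g-1}(2^g+1)$, with equality for generic smooth $p$''. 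For the downstream use---constancy of the fibre cardinality of $\Phi$ and hence unramifiedness---what is really needed is the generic count together with upper semicontinuity, or else one should shrink $\sH^\circ$ by removing the theta-null divisor.) Your normalization and finiteness discussion is fine as a sketch, though note that the fibre over a product of $d$ generic lines contains more than the $d!$ diagonal pairs, since conjugating a diagonal $R$ by sign matrices in the centralizer of $D$ already acts trivially; the map is still generically finite, just with a larger degree.
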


\begin{proof}
  For smooth curves, the equivalence classes of symmetric
  determinantal representations are in canonical bijection with
  ineffective even theta characteristics; see \cite[Thm.~2.1]{PSV} and
  references given there.
\end{proof}

\begin{Thm}\label{Thm:U}
  The set $\sH^\circ$ of smooth hyperbolic polynomials in three
  variables is an open,
  dense, path-connected subset of $\sH$, and each fibre of $\Phi$ over a
  point of $\sH^\circ$ consists of exactly $2^{g-1}\cdot(2^g+1)\cdot
  2^{d-1}\cdot d!$ distinct points.
  \end{Thm}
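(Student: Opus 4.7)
The plan is to separate the topological claims about $\sH^\circ$ from the fibre-count claim, handling the former by invoking Proposition \ref{Prop:TopologyH} and the latter by combining the preceding theorem with an explicit count of normalized representations in each symmetric equivalence class.

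For the topology of $\sH^\circ$: openness in $\sH$ is immediate since the existence of a complex singular point is a closed discriminantal condition. Density in $\sH$ follows by combining part~(1) of Proposition \ref{Prop:TopologyH} (which gives $\sH=\clos(\interior(\sH))$) with part~(4) (which says that $\interior(\sH)\setminus\sH^\circ$ has real codimension at least $2$), so that $\sH^\circ$ is dense in $\interior(\sH)$ and hence in $\sH$. Path-connectedness of $\sH^\circ$ then follows from path-connectedness of $\interior(\sH)$ (part~(2)) together with the standard fact that removing a real subvariety of codimension at least $2$ from a path-connected open subset of a real vector space preserves path-connectedness.

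For the fibre count, the preceding theorem gives that any $p\in\sH^\circ$ has exactly $N:=2^{g-1}(2^g+1)$ symmetric equivalence classes of normalized representations. The claim therefore reduces to showing that each equivalence class contains exactly $2^{d-1}\cdot d!$ normalized representations. Since $p$ is in particular strictly hyperbolic (by Proposition \ref{Prop:TopologyH}(3) applied to smooth $p$), the diagonal entries of $D$ in any normalized representation are the pairwise distinct roots of $p(t,1,0)$. Consequently, any normalization-preserving equivalence between two normalized representations $(D,R)$ and $(D',R')$ is given by conjugation by some signed permutation matrix (possibly combined with a simultaneous sign flip coming from the $\{\pm 1\}$-factor of the equivalence group). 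One verifies that these transformations, taken together, sweep out precisely the orbit of a fixed $(D,R)$ under the full group of signed permutations in $\Orth_d(\C)$, which has order $2^d\cdot d!$.

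The stabilizer of a fixed $(D,R)$ under this signed-permutation action is identified by observing that the diagonal sign matrices commuting with $D$ and preserving $R$ correspond to sign patterns constant on the connected components of the ``sign graph'' of $R$ (with edges $i\sim j$ whenever $R_{ij}\ne 0$). Since $\sV_\C(p)$ is smooth and hence $p$ irreducible, this sign graph must be connected: otherwise $R$ would be block diagonal after reordering, forcing $\det(tI_d+xD+yR)$ to factor and contradicting irreducibility of $p$. Connectedness pins the stabilizer down to $\{\pm I_d\}$ of order $2$, and orbit-stabilizer then yields $2^d\cdot d!/2=2^{d-1}\cdot d!$ distinct normalized representations per equivalence class; multiplication by $N$ produces the desired total. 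The main technical obstacle I anticipate is the careful bookkeeping of the $\SL_d(\C)$ and $\{\pm 1\}$ factors of the equivalence group, and in particular the interaction with scalar multiples of $I_d$ for $d$ divisible by $4$, to ensure that these together realize all $2^d\cdot d!$ signed permutations acting on normalized representations.
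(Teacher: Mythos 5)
Your proof is correct and follows essentially the same route as the paper: you deduce the topological assertions from Proposition \ref{Prop:TopologyH}, invoke the preceding theorem for the count of symmetric equivalence classes, and then use the orbit–stabilizer count for the signed-permutation action (observing that the diagonal entries of $D$ are distinct because $p$ is strictly hyperbolic, and that a nontrivial stabilizing sign matrix would force $R$ to be block-diagonal, contradicting irreducibility of $p$). Your ``sign graph'' language is just a more explicit packaging of the paper's block-diagonality observation; also note a harmless sign typo — the diagonal entries of $D$ are the roots of $p(t,-1,0)$ rather than $p(t,1,0)$.
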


  \begin{proof} The statements about the topology of $\sH^\circ$
    follow immediately from Prop.~\ref{Prop:TopologyH}. Let
    $p\in\interior(\sH)$ and let $(D,R)\in\Phi^{-1}(p)$, which means
    $p=\det(tI_d+xD+yR)$. The diagonal entries of $D$ are the
    zeros of $p(t,-1,0)$. Since $p$ is strictly hyperbolic, these
    zeros are real and distinct. So $D$ is a real diagonal matrix with
    distinct entries. It follows then that the centralizer of $D$ in
    $\Orth_d(\C)$ consists precisely of the $2^d$ diagonal matrices
    with entries $\pm 1$. Let $S$ be such a matrix with $S\neq \pm
    I_d$. We want to identify the set of symmetric matrices $R$ that
    commute with $S$. Up to permutation, we may assume that the first
    $k$ diagonal entries of $S$ are equal to $-1$ and the remaining
    $d-k$ are equal to $1$. It follows then that any $R$ with $SR=RS$
    must have $r_{ij}=r_{ji}=0$ if $i>k\ge j$, so that $R$ is
    block-diagonal. For such $R$ to show up in a pair
    $(D,R)\in\Phi^{-1}(p)$, the polynomial $p$ must be reducible. In
    particular, if $p\in\sH^\circ$, there is no such $S$ commuting
    with $R$. It follows then that $\{SRS\:|\: S\text{ diagonal with
    }S^2=I_d\}$ has $2^{d-1}$ distinct elements. Permuting the
    distinct diagonal entries of $D$ gives $d!$ possible choices of
    $D$. This, combined with the count of equivalence classes in the
    preceding theorem, completes the proof.
\end{proof}

\begin{Cor}
  The restriction of $\Phi$ to $\Phi^{-1}(\sH^\circ)$ is smooth.
\end{Cor}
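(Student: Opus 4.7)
The plan is to show that $\Phi$ is étale at every point of $\Phi^{-1}(\sH^\circ)$; since $\sM$ and $\sF$ are smooth complex affine varieties of equal dimension $\frac{d(d+3)}{2}$, étaleness is precisely the content of smoothness for the restriction.

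First I determine the generic degree $\deg(\Phi)$. As a dominant polynomial map between irreducible smooth complex varieties of the same dimension, $\Phi$ is étale over some Zariski-dense open $U\subset\sF$, and $|\Phi^{-1}(p)|=\deg(\Phi)$ for every $p\in U$. By Prop.~\ref{Prop:TopologyH} and Remark~\ref{Remark:StrictlyHyperbolicSingular}, $\sH^\circ$ is a non-empty Euclidean-open subset of $\sF_\R$ and is therefore Zariski-dense in the complex affine space $\sF$ (any non-empty real-open subset of the real part of an affine space is Zariski-dense in the complexification). Hence $U\cap\sH^\circ\neq\emptyset$, and Theorem~\ref{Thm:U} gives $\deg(\Phi)=N$, where $N:=2^{g-1}(2^g+1)\cdot 2^{d-1}\cdot d!$.

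Next I derive a contradiction from the existence of a ramified preimage in $\Phi^{-1}(\sH^\circ)$. Suppose some $(D_0,R_0)\in\Phi^{-1}(\sH^\circ)$ is a ramification point of $\Phi$, and set $p_0=\Phi(D_0,R_0)$. Since $\Phi^{-1}(p_0)$ is a finite set of $N$ points, $\Phi$ is quasi-finite at each preimage, so Zariski's main theorem provides pairwise-disjoint Euclidean-open neighborhoods $V_i\ni(D_i,R_i)$, $i=0,\dots,N-1$ (with $(D_0,R_0)$ our chosen preimage), and a common open $W\ni p_0$ such that each restriction $\Phi|_{V_i}\colon V_i\to W$ is a finite map with $\Phi|_{V_i}^{-1}(p_0)=\{(D_i,R_i)\}$; miracle flatness makes each flat of some local degree $m_i$, with $m_0\ge 2$ by the ramification hypothesis and $m_i\ge 1$ for every $i$. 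Since $\sH^\circ$ is Zariski-dense in $\sF$, we may choose $p\in W\cap\sH^\circ$ lying in the étale locus of each $\Phi|_{V_i}$; then $|\Phi^{-1}(p)\cap V_i|=m_i$, so $|\Phi^{-1}(p)|\ge\sum_i m_i\ge N+1$, contradicting $|\Phi^{-1}(p)|=N$ from Theorem~\ref{Thm:U}. Hence $\Phi^{-1}(\sH^\circ)$ contains no ramification point, so $\Phi|_{\Phi^{-1}(\sH^\circ)}$ is étale, in particular smooth.

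The main obstacle is setting up the local finite-flat structure at each preimage of $p_0$ via Zariski's main theorem and miracle flatness, together with confirming that a suitable $p\in\sH^\circ$ lying in the étale locus of every $\Phi|_{V_i}$ can be found—this uses the Zariski-density of $\sH^\circ$ in $\sF$ and the openness of the relevant étale loci, but is otherwise routine.
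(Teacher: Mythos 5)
Your proof is correct and follows the same strategy as the paper's: étaleness of $\Phi$ over $\sH^\circ$ is deduced from the constancy of the fibre cardinality established in Theorem~\ref{Thm:U}. The paper compresses this into a one-line appeal to Hartshorne III.10, whereas you spell out the underlying local-degree-counting argument (via Zariski's main theorem and miracle flatness) together with the Zariski-density step identifying the constant fibre cardinality $N$ over $\sH^\circ$ as the generic degree of $\Phi$; this is the same idea, just made fully explicit.
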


\begin{proof}
The restriction of $\Phi$ to $\Phi^{-1}(\sH^\circ)$ is a polynomial map with
finite fibres that is unramified over $\sH^\circ$, since the cardinality of
the fibre does not change. Hence it is smooth (see for example
Hartshorne \cite[III.10]{Ha}).
\end{proof}

\noindent We sketch an argument for deducing the Helton-Vinnikov theorem from Thm.~\ref{Thm:U}.

\begin{Cor}[Helton-Vinnikov Theorem] Every hyperbolic polynomial
  $p\in\sH$ in three variables admits a determinantal representation
  $p=\det(tI_d+xD+yR)$ with $D$ diagonal and $R$ real symmetric.
\end{Cor}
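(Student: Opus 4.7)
The plan is to first establish the existence of a real determinantal representation for every \emph{smooth} hyperbolic polynomial $p\in\sH^\circ$ via a monodromy/path-lifting argument, and then extend the statement to all of $\sH$ by density of $\sH^\circ$ in $\sH$ combined with closedness of the image $\Phi(\sM_\R)\subseteq\sH$.

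The structural input is that, by the preceding corollary, the restriction $\Phi\colon\Phi^{-1}(\sH^\circ)\to\sH^\circ$ is a finite, unramified smooth map with fibres of constant cardinality, and is therefore a topological covering. The complementary ingredient is the involution $\sigma$ on $\sM$ given by entrywise complex conjugation, which commutes with $\Phi$ in the sense that $\Phi\circ\sigma=\sigma\circ\Phi$ (a direct computation: conjugating the matrix entries conjugates the coefficients of the characteristic polynomial). First I would produce a real basepoint $(D_0,R_0)\in\sM_\R$ with $p_0:=\Phi(D_0,R_0)\in\sH^\circ$: any real pair automatically satisfies $\Phi(D,R)\in\sH$, because $uD+vR$ is real symmetric for all real $u,v$ and hence has real eigenvalues; distinctness of the diagonal of $D$ and smoothness of $\sV_\C(\Phi(D,R))$ are Zariski-open conditions on $(D,R)\in\sM$ that are nonempty over $\C$ (by Dixon's theorem generic smooth plane curves of degree $d$ admit such normal-form representations), and since $\sM_\R$ is Zariski-dense in $\sM$, these open conditions are also nonempty on $\sM_\R$.

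Given an arbitrary target $p\in\sH^\circ$, I would choose a continuous path $\gamma\colon[0,1]\to\sH^\circ$ from $p_0$ to $p$, which exists by the path-connectedness part of Thm.~\ref{Thm:U}, and lift it to a continuous path $\tilde\gamma\colon[0,1]\to\Phi^{-1}(\sH^\circ)$ with $\tilde\gamma(0)=(D_0,R_0)$. Because $\gamma$ takes values in $\sF_\R$ we have $\sigma\circ\gamma=\gamma$, and therefore $\sigma\circ\tilde\gamma$ is another continuous lift of $\gamma$ that also starts at $\sigma(D_0,R_0)=(D_0,R_0)$. Uniqueness of path lifting in a covering then forces $\sigma\circ\tilde\gamma=\tilde\gamma$, i.e., $\tilde\gamma(t)\in\sM_\R$ for every $t$; consequently $\tilde\gamma(1)\in\sM_\R\cap\Phi^{-1}(p)$ is the desired real representation of $p$.

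For $p\in\sH\setminus\sH^\circ$, write $p=\lim_n p_n$ with $p_n\in\sH^\circ$; this is possible because Prop.~\ref{Prop:TopologyH}(1),(4) give $\sH=\overline{\interior(\sH)}$ and $\interior(\sH)\setminus\sH^\circ$ has codimension at least $2$ in $\sF_\R$, so $\sH^\circ$ is dense in $\sH$. Each $p_n$ admits a real representation $(D_n,R_n)\in\sM_\R$ by the previous step, and since $\Phi(\sM_\R)\subseteq\sH$ is closed by \cite[Lemma 3.4]{PV}, passing to the limit yields $p\in\Phi(\sM_\R)$. The main obstacle in this approach is confirming that the monodromy lift $\tilde\gamma$ stays globally real; this is exactly where $\Phi\circ\sigma=\sigma\circ\Phi$, combined with uniqueness of path lifting in the covering, promotes a pointwise real basepoint to a globally real lifted path.
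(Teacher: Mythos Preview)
Your proof is correct and follows essentially the same route as the paper's. The paper argues more tersely that, since $\Phi$ has fibres of constant cardinality over the path-connected set $\sH^\circ$, the number of \emph{real} points in each fibre is also constant; your explicit use of the conjugation involution $\sigma$ together with uniqueness of path lifting is precisely the mechanism underlying that sentence. The existence of a real basepoint (via Zariski density/Bertini) and the extension from $\sH^\circ$ to $\sH$ by closedness of $\Phi(\sM_\R)$ are handled identically.
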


\begin{proof}
  Since all fibres of $\Phi$ over $\sH^\circ$ have the same
  cardinality and $\sH^\circ$ is path-connected, the number of real
  points in each fibre must also be constant over $\sH^\circ$. That
  number cannot be zero, since there exist fibres with real
  points. (This amounts to showing that for each $d\ge 1$ there exists
  a real pair $(D,R)$ such that $p=\det(tI_d+xD+yR)$ defines a smooth
  curve. This can for example be deduced with the help of Bertini's
  theorem.) It follows that $\sH^\circ$ is contained in
  $\Phi(\sM_\R)$. On the other hand, $\Phi(\sM_\R)$ is closed in
  $\sF_\R$ by \cite[Lemma 3.4]{PV} and contained in $\sH$, hence
  $\Phi(\sM_\R)=\sH$.
\end{proof}

\begin{Remark}\label{rem: real repr}
 The number of equivalence classes of real definite representations of
 a hyperbolic curve is in fact also known, namely it is $2^g$. See
 \cite{PSV} and references to \cite{Vi93} given there. We conclude
 that $\Phi^{-1}(p)\cap\sM_\R$ consists of $2^g\cdot 2^{d-1}\cdot d!$
 distinct points for every $p\in\sH^\circ$.

  Note also that, even if $p$ is hyperbolic, it will typically admit real
  determinantal representations $p=\det(tM_1+xM_2+yM_3)$ that are not
  definite, i.e.~are not equivalent to such a representation with
  $M_1=I_d$ and $M_2,M_3$ real. Such representations do not reflect
  the hyperbolicity of $p$.
\end{Remark}

\section{The Nuij path}\label{Sec:N-path}
In order to use homotopy continuation methods for numerical
computations, we need an explicit path connecting any two given points
in the space $\sH$ of hyperbolic polynomials. 

\subsection{Original N-path}
Following Nuij \cite{Nu}, we consider the following operators on
polynomials $\sF_\R \subset \R[t,x]=\R[t,x_1,\ldots,x_n]$. 
\begin{align*}
  T^\ell_s &\colon p\mapsto p+s \ell\frac{\partial p}{\partial
    t}\quad (\ell\in\R[x] \text{ a linear form})\\
  G_s &\colon p\mapsto p(t,sx)\\
  F_s &= (T^{x_1}_s)^d\cdots (T^{x_n}_s)^d\\
  N_s &= F_{1-s}G_s\,,
\end{align*}
where $s\in\R$ is a parameter. For fixed $s$, all of these
are linear operators on $\R[t,x]$ taking the affine-linear subspace
$\sF_\R$ to itself. Clearly, $G_s$ preserves hyperbolicity for any $s\in\R$,
and $G_0(p)=t^d$ for all $p\in\sF_\R$. The operator $F_s$ is used to
``smoothen'' the polynomials along the path $s\mapsto G_s(p)$. The
exact statement is the following.

\begin{Prop}[Nuij \cite{Nu}] \label{Prop:Nuij}
For $s\ge 0$, the operators $T^\ell_s$
  preserve hyperbolicity. Moreover, for $p\in\sH$, we have $N_s(p)\in\interior(\sH)$ for all
  $s\in [0,1)$, with $N_0(p)\in\interior(\sH)$ not depending on $p$
  and $N_1(p)=p$.\qed
\end{Prop}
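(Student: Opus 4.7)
My plan is to reduce everything to a univariate statement about the operator $T_c \colon q \mapsto q + cq'$ acting on $\R[t]$, and then lift it back. Since evaluating $T^\ell_s p$ at $x = u$ yields $p(t,u) + s\ell(u) p_t(t,u)$, preservation of hyperbolicity by $T^\ell_s$ (for any $s \in \R$, in fact) reduces to the univariate fact that if $q \in \R[t]$ has only real roots, so does $q + cq'$ for every $c \in \R$. This is classical: for $q$ with $d$ distinct real roots $r_1 < \cdots < r_d$, the values $(q + cq')(r_i) = cq'(r_i)$ alternate in sign, providing $d-1$ real roots in the intervals $(r_i, r_{i+1})$, and the remaining root is forced to be real by parity since a single unaccounted complex-conjugate pair would produce $d+1$ roots. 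The multiple-root case follows by continuity.

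For the substantive part, I would sharpen the univariate statement into a desingularization: for $c \neq 0$, $T_c^d$ sends any real-rooted polynomial of degree $d$ to one with only simple real roots. Writing $q = \prod_j (t - a_j)^{m_j}$ with distinct $a_j$, a direct factorization shows that each $a_j$ is a root of $T_c q$ of multiplicity exactly $m_j - 1$. The remaining $k$ roots of $T_c q$ (where $k$ is the number of distinct $a_j$) are the solutions of $q'/q = -1/c$. Since $q'/q = \sum_j m_j/(t - a_j)$ is strictly decreasing on each maximal interval of its domain with the expected limits at endpoints, it attains any real value at most once per interval, contributing one simple root in each gap $(a_j, a_{j+1})$ plus exactly one outside. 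Thus the maximum multiplicity drops from $m$ to $\max(m-1, 1)$ under a single application, so $d-1$ iterations already leave only simple real roots.

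Next I would lift back to $\sF_\R$. Since $T^{x_i}_s$ and $T^{x_j}_s$ commute on $\R[t,x]$ (an easy check, as $\partial_t$ commutes with multiplication by $x_i$), and evaluation at $x = u$ intertwines $T^{x_i}_s$ with the univariate $T_{su_i}$, one obtains $F_s(p)(t,u) = T_{su_1}^d \cdots T_{su_n}^d(p(\cdot,u))$. For $u \in \R^n \setminus \{0\}$ some $u_j \neq 0$, so for $s > 0$ we have $s u_j \neq 0$ and $T_{su_j}^d$ sends $p(\cdot,u)$ to a polynomial with only simple real roots by the univariate desingularization, while the remaining factors either act as the identity (when $u_i = 0$) or preserve simple-real-rootedness. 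Hence $F_s(p) \in \interior(\sH)$ for every $s > 0$.

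Finally, $G_s$ visibly maps $\sH$ into $\sH$ with $G_0(p) = t^d$ (using $p(1,0,\dots,0) = 1$) and $G_1(p) = p$, so $N_s(p) = F_{1-s}(G_s(p)) \in \interior(\sH)$ for $s \in [0, 1)$ since $1 - s > 0$; the endpoints give $N_1(p) = F_0(G_1(p)) = p$ and $N_0(p) = F_1(t^d)$, which depends only on $d$. The main obstacle is the desingularization step: one must carefully verify that the ``new'' roots of $T_c q$ produced by solving $q'/q = -1/c$ are simple, distinct from each other, and distinct from the old $a_j$, all of which rest on the strict monotonicity of $q'/q$ on each interval of its domain and require some care with boundary behavior.
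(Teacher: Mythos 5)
Your argument is correct and essentially reproduces Nuij's original proof, which is exactly the reference the paper cites for this proposition (the paper itself gives no proof beyond the citation to \cite{Nu} and a \textsc{qed} mark). In particular, the reduction to the univariate operator $T_c\colon q\mapsto q+cq'$, the logarithmic-derivative argument showing that one application of $T_c$ with $c\neq 0$ strictly lowers the maximal root multiplicity while producing new simple roots in the gaps, the lift back to $\sF_\R$ via commutativity of the $T^{x_i}_s$, and the endpoint computations $N_0(p)=F_1(t^d)$, $N_1(p)=p$ all match Nuij's approach.
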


For $p\in\sF_\R$, we call $[0,1]\ni s\mapsto N_s(p)$ the \emph{N-path} of $p$. 

\begin{Remark}
The N-path defines the contraction that appeared in
Proposition~\ref{Prop:TopologyH}(2): for all $p\in\sH$, the N-path leads to
$N_0(p)=F_1G_0(p)=F_1(t^d)=N_0(t^d)$.

However, it does not define a {\em strong} deformation retract as
$N_s(t^d)=t^d$ holds only at the end of the N-path.
\end{Remark}

In order
to ensure smoothness of the map $\Phi\colon\sM\to\sH$ along an N-path,
we would like to ensure that the N-path stays inside the set
$\sH^\circ$ of smooth hyperbolic polynomials and thus away from the
ramification locus, by
the discussion in the preceding section. 

\begin{Example}
If $n=2$ and $d\le 3$, we know that $\sH^\circ=\interior(\sH)$ (Remark
\ref{Remark:StrictlyHyperbolicSingular}). For $n=d=2$, we verify by explicit computation that the N-path stays
inside the strictly hyperbolic conics, which in this case is just
equivalent to irreducibility. 
  
Let $D=\Diag(d_1,d_2)$ and $R = \begin{bmatrix}
  r_{(1,1)}&r_{(1,2)}\\
  r_{(1,2)}&r_{(2,2)}
\end{bmatrix}.$
The quadric  $$N_s(\Phi(D,R)) = Ax^2+Bxy+Cy^2+Dxt+Eyt+Ft^2$$ is not
contained in $\sH^\circ$ if and only if it factors, which happens if and only if
\begin{eqnarray*}
4\Disc({N_s(\Phi(D,R))}) &=& 4\det
\begin{bmatrix}
  A&B/2&D/2\\
  B/2&C&E/2\\
  D/2&E/2&F
\end{bmatrix} \\
&=&2s^2(s-1)^2(d_1 - d_2)^2+\\
&&2s^2(s-1)^2\left(r_{11}-r_{22}\right)^2 + \\
&&s^4r_{12}^2\left({d_1} - {d_2} \right)^2+\\
&&8r_{12}^2 s^2\left(1-s\right)^2+\\
&&16(s-1)^4\\
&=& 0.
\end{eqnarray*}
The sum-of-squares representation was produced using the intuition obtained by a numerical sum-of-squares decomposition delivered by \textsc{Yalmip} and further exact symbolic computations in \textsc{Mathematica} (see {\tt Nuij-d2.mathematica}~\cite{Leykin-Plaumann:wwwVCN}).
The components of the sum-of-squares decomposition above vanish simultaneously only when $s = 1$ (this proves the proposition) and either $r_{12} = 0$ or $d_2 = d_1$.
\end{Example}

Unfortunately, for $d\ge 4$ it is no longer true that the N-path
always stays inside $\sH^\circ$, due to the existence of strictly hyperbolic
polynomials with complex singularities.

\begin{Example}
  Consider again the polynomial 
\[
p=1/19(19t^4 -31 x^2t^2 - 86 y^2t^2 + 9x^4 + 41 x^2 y^2 +39
  y^4),
\]
which is contained in $\interior(\sH)$ but not in $\sH^\circ$ (c.f.~Remark
\ref{Remark:StrictlyHyperbolicSingular}). One can verify through
direct computation that the polynomial 
\begin{align*} r
= \frac{1}{124659}&\biggl(124659 t^4-221616 t^3 (x+y)-324 t^2
-    \left(205 x^2-912 x y+1580 y^2\right)\\ &+1440 t \left(98 x^3+41
-      x^2 y+316 x y^2+373 y^3\right)\\ &+40 \left(1099 x^4-1568 x^3
-      y+5540 x^2 y^2-5968 x y^3+5849 y^4\right)\biggr)
\end{align*}
is hyperbolic and smooth, i.e.~$r\in H^\circ$, but $N_{1/9}(r)=p$, so
that the N-path for $r$ is not fully contained in
$\sH^\circ$. 
\end{Example}

However, one can still attempt to avoid the ramification points in the following manner.

\begin{Prop}\label{Prop: avoid L}
  The N-path $N_s(p)$, with parameter $s$ varied along a
  piecewise-linear path $[0,c]\cup[c,1) \subset \C$, does not meet the
  ramification locus of $\Phi$, for almost all $c\in \C$. (More
  precisely, this holds for any $c$ taken in the complement of some
  proper real algebraic subset of $\C \simeq \R^2$.)
\end{Prop}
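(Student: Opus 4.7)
My plan is to complexify the N-path parameter $s$, show that for each fixed $p$ the set of $s\in\C$ with $N_s(p)$ in the ramification locus is finite, and then convert this into a real-algebraic condition on $c$.

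Write $\sR\subseteq\sF$ for the branch locus of $\Phi$, i.e., those $q\in\sF$ over which $\Phi$ is ramified. By the Corollary to Theorem~\ref{Thm:U}, $\sR$ is disjoint from $\sH^\circ$; since $\sH^\circ$ is a nonempty Euclidean-open subset of $\sF_\R$ whose complex Zariski closure in $\sF$ is all of $\sF$, this forces $\sR$ to be a proper closed algebraic subvariety of $\sF$. For fixed $p\in\sH$, the map $\gamma\colon\C\to\sF$, $s\mapsto N_s(p)$, is polynomial in $s$, so $T_p:=\gamma^{-1}(\sR)$ is a closed algebraic subset of $\C$, hence either finite or all of $\C$.

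The key step is to rule out $T_p=\C$, equivalently, to exhibit some $s_\ast\in\C$ with $N_{s_\ast}(p)\in\sH^\circ$. If $p\in\sH^\circ$ itself, the choice $s_\ast=1$ works immediately since $\gamma(1)=p$. In general I would take $s_\ast=0$: the polynomial $N_0(p)=F_1(t^d)$ does not depend on $p$, so it suffices to check that this distinguished polynomial defines a smooth projective curve. Using Nuij's construction one has $(T^y_1)^d(t^d)=\prod_{i=1}^d(t-\alpha_i y)$ with pairwise distinct real $\alpha_i$, and then one verifies that applying $(T^x_1)^d$ to a product of this form leaves the resulting plane curve nonsingular.

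Once $T_p=\{s_1,\ldots,s_k\}$ is known to be finite, the remaining step is elementary planar geometry. The condition $s_i\in[0,c]$ forces $c$ to lie on the closed ray $\{s_i/\lambda:\lambda\in(0,1]\}$, and $s_i\in[c,1]$ forces $c$ to lie on the closed ray $\{(s_i-\mu)/(1-\mu):\mu\in[0,1)\}$; both are real-algebraic curves of real dimension one in $\C$. Their union over $i=1,\ldots,k$ is contained in a proper real algebraic subset of $\C\simeq\R^2$, and for any $c$ in the complement, the piecewise-linear path $[0,c]\cup[c,1)$ avoids $T_p$, so $\gamma$ along this path avoids $\sR$.

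The hardest step is the middle one: showing $T_p\neq\C$, i.e., that the complex polynomial curve $s\mapsto N_s(p)$ does not lie wholly inside $\sR$. For $p\in\sH^\circ$ this is immediate from the endpoint $\gamma(1)=p$, but handling arbitrary $p\in\sH$ hinges on smoothness of the specific polynomial $F_1(t^d)$, which I expect to establish by a somewhat lengthy (though elementary) computation with the Nuij operators.
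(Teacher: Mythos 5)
Your argument is a careful unwinding of the paper's one-sentence proof (``the ramification locus is a proper complex subvariety of $\sF$ and therefore has real codimension at least~$2$''), and it correctly identifies the pivotal step that the paper glosses over: ruling out $T_p=\C$, i.e.\ that the complexified N-path $s\mapsto N_s(p)$ is not entirely contained in $\sR$. The final reduction to a real-algebraic exclusion set for~$c$ matches what the paper intends. So structurally you are taking the same route, just more explicitly.

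Two issues, though, one of them substantive.  First, you state that the smoothness of the fixed endpoint $F_1(t^d)$ can be established ``by a somewhat lengthy (though elementary) computation with the Nuij operators.'' The paper explicitly says otherwise: shortly after this proposition it remarks that ``we do not know whether the fixed endpoint $N_0(p)=F_1(t^d)$ is contained in $\sH^\circ$ in all degrees $d$, although the direct computation in our experiments suggests that this is the case.'' What you propose to verify is a conjectured fact the authors themselves were unable to prove; Theorem~\ref{Thm:SmoothRandomEndpoint} only covers the \emph{randomized} endpoint $\randF_1^L(t^d)$ for a generic choice of $L$, not the specific $L=(x,\dots,x,y,\dots,y)$ giving $F_1(t^d)$. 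Second, and more subtly: you treat $s_i=0\in T_p$ as just another excluded value, with $c$ forced onto the ray $\{s_i/\lambda : \lambda\in(0,1]\}$. But that ray degenerates to all of $\C$ when $s_i=0$, and $s=0$ is the starting point of \emph{every} path $[0,c]\cup[c,1)$; so if $0\in T_p$ (equivalently $F_1(t^d)\in\sR$) the proposition is simply false, for every $c$. In particular your ``$p\in\sH^\circ$, take $s_*=1$'' branch does not escape the problem: knowing $\gamma(1)=p\notin\sR$ gives finiteness of $T_p$ but does nothing to keep $0$ out of $T_p$. The smoothness of $F_1(t^d)$ is therefore a hypothesis needed in \emph{both} branches, not only the general one. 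To be fair, the paper's own one-liner has exactly the same unstated dependency — the later remark about not knowing whether $F_1(t^d)\in\sH^\circ$ is, in effect, the authors flagging it.
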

\begin{proof}
  This immediately follows from the fact that the ramification
  locus is a proper complex subvariety of $\sF$ and therefore has real
  codimension at least $2$.
\end{proof}

\begin{Remark}
  Using a random path as described in the proposition may
  result in a non-real determinantal representation: indeed, a
  non-real path for $s$ is not guaranteed to result in a real point in
  the fiber $\Phi^{-1}(N_0(p))$ when a real point in
  $\Phi^{-1}(N_1(p))$ is taken.  

  One may ask for the probability of obtaining a real determinantal
  representation at the end of the path described in
  Proposition~\ref{Prop: avoid L}.  (For a more precise question, one
  may pick $c$ on a unit circle with a uniform distribution.) While
  this probability is clearly non-zero, deriving an explicit lower
  bound seems to be a very hard problem. A naive intuition suggests
  that the probability can be estimated as a ratio of the number of
  real representations (Remark~\ref{rem: real repr}) to the total
  count of complex representations (Theorem~\ref{Thm:U}). The
  experiments with quartic hyperbolic curves, however, suggest that
  the probability is much higher.
\end{Remark}

On the other hand, the set of polynomials for which the N-path avoids
the ramification locus altogether is dense, as the following proposition shows.

\begin{Prop}
  The set of strictly hyperbolic polynomials $p$ such that
  $N_s(p)\in\sH^\circ$ for all $s\in (0,1]$ is dense in $\sH$. (More
  precisely, its complement is a semialgebraic subset of positive codimension.)
\end{Prop}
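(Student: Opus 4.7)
The plan is to combine the codimension estimate of Prop.~\ref{Prop:TopologyH}(4) with a dimension count. The key point is that each N-path is a $1$-parameter curve in $\sF_\R$, so as $p$ ranges over the $N$-dimensional space $\sF_\R$ the total dimension of all N-paths is at most $N+1$, and the locus of starting points $p$ whose path meets a fixed codimension-$2$ subset should then have dimension at most $N-1$. To make this rigorous, I would invert $N_s$ in the variable $p$.

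First, for every $s \in (0, 1]$ the operator $N_s$ is an invertible affine-linear self-map of $\sF_\R$. Indeed, each $T^\ell_s = \id + s\ell\partial_t$ is unipotent on $\R[t,x]_d$ because $\ell\partial_t$ strictly decreases the filtration by $t$-degree and is therefore nilpotent; hence $T^\ell_s$ is invertible for all $s\in\R$, with explicit polynomial inverse. The operator $G_s$ is invertible for $s \ne 0$ with inverse $G_{1/s}$. Both classes of operators preserve the normalization $p(1,0) = 1$ cutting out $\sF_\R$, so their composition $N_s = F_{1-s}\circ G_s$ is an invertible affine-linear bijection of $\sF_\R$ for each $s \in (0, 1]$. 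This makes the map
\[
\psi : \sF_\R \times (0, 1] \longrightarrow \sF_\R, \qquad (q, s) \longmapsto N_s^{-1}(q)
\]
well-defined and regular (rational in $s$ with denominator a power of $s$, nonvanishing on $(0,1]$).

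Write $X = \interior(\sH) \setminus \sH^\circ$; by Prop.~\ref{Prop:TopologyH}(4), $X$ is semialgebraic of dimension at most $N - 2$. The complement inside $\interior(\sH)$ of the good set is exactly
\[
B = \{p \in \interior(\sH) : N_s(p) \in X \text{ for some } s \in (0, 1]\} = \psi(X \times (0, 1]),
\]
since $N_s(p)\in X$ is equivalent to $p = \psi(N_s(p), s)$. By Tarski--Seidenberg $B$ is semialgebraic, and as the image under the regular map $\psi$ of a set of dimension at most $N - 1$, we obtain $\dim B \le N - 1$, so $B$ has positive codimension in $\sF_\R$. The complement of the good set in $\sH$ is contained in $B \cup (\sH \setminus \interior(\sH))$; the second piece also has positive codimension since $\sH = \clos(\interior(\sH))$ by Prop.~\ref{Prop:TopologyH}(1), and density in $\sH$ follows.

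The main thing to verify carefully is the invertibility of $N_s$ on $\sF_\R$ for $s \in (0, 1]$ along with the preservation of the normalization defining $\sF_\R$; once this is established, the statement reduces to a straightforward dimension count using the codimension bound from Prop.~\ref{Prop:TopologyH}(4) and Tarski--Seidenberg.
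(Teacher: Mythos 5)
Your proof is correct and follows essentially the same route as the paper: both rest on the bijectivity of $N_s$ for $s\neq 0$ (so that the incidence set $\{(p,s):N_s(p)\in\interior(\sH)\setminus\sH^\circ\}$ has dimension at most $\dim\sF_\R-1$) together with the codimension-$2$ estimate from Prop.~\ref{Prop:TopologyH}(4), you phrasing this as the image of $X\times(0,1]$ under $(q,s)\mapsto N_s^{-1}(q)$ and the paper as the projection of $N^{-1}(\sR)$ using that its fibres are one-dimensional. A small merit of your version is that by restricting $s$ to $(0,1]$ and inverting $N_s$ directly, you cleanly sidestep the degenerate fibre over the fixed endpoint $N_0(p)=F_1(t^d)$ at $s=0$, which the paper has to exclude parenthetically.
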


\begin{proof}
Consider the map
\[
  N\colon\left\{
    \begin{array}{ccc}
      \sF_\R\times\R &\to &\sF_\R\\
      (p,s) &\mapsto &N_s(p).
    \end{array}\right.
\]
and put $\sR=\interior(\sH)\setminus\sH^\circ\subset\sF_\R$. We want
to show that the projection of the semialgebraic set
$N^{-1}(\sR)\subset\sF_\R\times\R$ onto $\sF_\R$ has codimension at
least $1$.

The linear operator $N_s=(T^{x_1}_{1-s})^d\cdots (T^{x_n}_{1-s})^d
G_s$ is bijective for all $s\neq 0$. For $p = -sx_k (\partial
p/\partial t)$ is only possible for $p=0$, since $sx_k(\partial
p/\partial t)$ has strictly lower degree in $t$ than $p$, so
$T^{x_k}_s$ has trivial kernel. Furthermore, $G_s$ is bijective for
$s\neq 0$, hence so is $N_s$. This implies that all fibres $N^{-1}(p)$
of $N$ have dimension at most $1$ (except if $p=N_0(p)$ is the fixed
endpoint of the N-path). In particular, all fibres of $N$ over $\sR$
are at most $1$-dimensional, and since $\sR$ has codimension $2$ in
$\sF_\R$, this implies that $N^{-1}(\sR)$ also has codimension at
least $2$ in $\sF_\R\times\R$, so that the projection of $N^{-1}(\sR)$
onto $\sF_\R$ has codimension at least $1$, as claimed.
\end{proof}

In principle, this proposition can be used as follows: Suppose
$p\in\sH$ is such that the N-path intersects the ramification
locus. If $p\in\interior(\sH)$, we can apply the algorithm to small random
perturbations of $p$, which will avoid the ramification
locus with probability $1$. If $p$ is a boundary point of $\sH$,
i.e.~if $p$ is not strictly hyperbolic, we can first replace $p$ by
$N_{1-\epsilon}(p)$ for small $\epsilon>0$, which is strictly
hyperbolic, and then perturb further if necessary.

On the other hand, we do not know whether the fixed endpoint $N_0(p)=F_1(t^d)$
is contained in $\sH^\circ$ in all degrees $d$, although the direct computation in our experiments
suggests that this is the case. (See also Thm.~\ref{Thm:SmoothRandomEndpoint} below).

\subsection{Randomized N-path}
The following modification of Nuij's construction has proved itself
useful in computations. Let $e=\max\{d,n\}$ and choose a sequence
$L=(\ell_1,\dots,\ell_e)\in\R[x]^e$ of $e$ linear forms. The {\em
  randomized N-path} $\randN_s^{L}$ given by $L$ is defined by
\begin{align*}
\randF_s^{L} &= T^{\ell_1}_s T^{\ell_2}_s\cdots T^{\ell_e}_s\,,\\
\randN_s^{L} &= \randF_{1-s}^{L}\,G_s\,,
\end{align*}
with $T$, $G$ as before. Thus the randomized N-path only involves
$\max\{d,n\}$ differential operators rather than $dn$. In practice,
replacing the N-path by the randomized N-path has worked very well
(c.f.~Remark \ref{Remark:N-path-comparison}).\\
The product of operators in $F_s^L$ can be expanded
explicitly, namely
\[
\randF_s^{L}(p)=\sum_{k=0}^e
s^k\sigma_k(\ell_1,\dots,\ell_e)\frac{\partial}{\partial t^k}p
\]
for all $p\in\sF_\R$, where $\sigma_k(y_1,\dots,y_e)$ denotes the elementary
symmetric polynomial of degree $k$ in the variables $y_1,\dots,y_e$.

\begin{Conjecture}\label{conjecture:main}
Let $e\geq\{d,n\}$. 
\begin{enumerate} 
\item For a given $p\in\sH$, the set of $L$ such that
  $\randN_s^{L}(p) \in \sH^\circ$ for $s\in [0,1)$ is dense in
  $\R[x]^e$.
\item For a general choice of the linear
forms $L\in\R[x]^e$, the set of polynomials $p$ such
that $\randN_s^{L}(p) \in \sH^\circ$ for all $s\in[0,1]$ is dense in
$\sH$. 
\end{enumerate}
\end{Conjecture}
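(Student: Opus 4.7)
The plan is to follow the strategy used earlier for the original N-path: exploit the fact that $\randN_s^L$ is a linear automorphism of $\sF_\R$ for $s$ in the open interval $(0,1)$ and reduce each statement to a transversality argument against the set $\sR := \interior(\sH)\setminus\sH^\circ$, which has real codimension at least $2$ in $\sF_\R$ by Prop.~\ref{Prop:TopologyH}(4).

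For part (2), I would fix a generic $L=(\ell_1,\dots,\ell_e)$ with all $\ell_i\neq 0$ and study the map $\sN^L\colon\sF_\R\times[0,1]\to\sF_\R$ given by $(p,s)\mapsto\randN_s^L(p)$. Each operator $T^{\ell_i}_{1-s}$ has trivial kernel when $(1-s)\ell_i\neq 0$, and $G_s$ is invertible for $s\neq 0$, so $\randN_s^L$ is a linear automorphism of $\sF_\R$ for every $s\in(0,1)$. Consequently $\partial_p\sN^L$ is already surjective on $\sF_\R\times(0,1)$, making $\sN^L$ a submersion there, so $(\sN^L)^{-1}(\sR)$ has semialgebraic codimension at least $2$. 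Its projection to $\sF_\R$ then has codimension at least $1$, which is what is needed. The endpoints $s\in\{0,1\}$ must be handled separately: at $s=1$ the map is the identity and contributes nothing, while at $s=0$ every $p$ maps to the single polynomial $r_0:=\randF_1^L(t^d)$, and one must verify $r_0\notin\sR$. This reduces part (2) to the auxiliary claim that, for generic $L$ with $e\ge d$, the polynomial
\[
\randF_1^L(t^d)=\sum_{k=0}^{e}\sigma_k(\ell_1,\dots,\ell_e)\frac{d!}{(d-k)!}\,t^{d-k}
\]
lies in $\sH^\circ$. Hyperbolicity of this form follows from Nuij's proposition applied to $t^d$, so smoothness is the only missing ingredient; I would establish it by treating the coefficients of the $\ell_i$ as indeterminates and verifying that the discriminant, viewed as a homogeneous form in $(t,x)$, is a nonzero polynomial in those parameters.

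Part (1) is more delicate. With $p\in\sH$ fixed, the map $\Psi_p\colon\R[x]^e\times[0,1)\to\sF_\R$, $(L,s)\mapsto\randN_s^L(p)$, is no longer a submersion, since its image has dimension at most $ne+1$ while $\dim\sF_\R$ can be much larger. Instead I would aim for pointwise transversality: at every $(L_0,s_0)$ with $\randN_{s_0}^{L_0}(p)\in\sR$, the image of $d\Psi_p$ should not be contained in the tangent space of $\sR$. The partial derivative in direction $\delta\ell_i$ takes the explicit form
\[
T^{\ell_1}_{1-s_0}\cdots T^{\ell_{i-1}}_{1-s_0}\bigl[(1-s_0)\,\delta\ell_i\,\partial_t\bigr]\,T^{\ell_{i+1}}_{1-s_0}\cdots T^{\ell_e}_{1-s_0}\bigl(G_{s_0}(p)\bigr),
\]
and varying $i$, $\delta\ell_i$, and $s$ produces a rich family of directions in $\sF_\R$. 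If these span a subspace transverse to $\sR$ at every intersection point, then $\Psi_p^{-1}(\sR)$ has codimension at least $2$ in $\R[x]^e\times[0,1)$ and its projection to $\R[x]^e$ is nowhere dense, as required.

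The main obstacle is the transversality step in part (1): it must hold for \emph{every} $p\in\sH$, including pathological $p$ for which the $(ne+1)$-parameter family $\{\randN_s^L(p)\}$ might conceivably slide along the singular stratum $\sR$. A Bertini-type statement showing that this family cannot be contained in $\sR$, combined with a careful description of the tangent space to $\sR$ at curves with isolated singularities, seems to be what is missing.
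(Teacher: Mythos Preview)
The statement you are trying to prove is labeled a \emph{Conjecture} in the paper, and the paper does not give a proof of either part. There is therefore no ``paper's own proof'' to compare against; the authors explicitly leave both assertions open.

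What the paper \emph{does} prove is a fragment of part~(2): for $n=2$, the fixed endpoint $\randF_1^L(t^d)$ lies in $\sH^\circ$ for a generic choice of $L$ (Theorem~\ref{Thm:SmoothRandomEndpoint}). The argument is an inductive application of Bertini's theorem to the family $(t+u)q+tvq'$ in $\P^4$, showing that each operator $T_1^\ell$ applied to $t^kq$ with $\sV_\C(q)$ smooth again produces $t^{k-1}$ times a smooth factor for generic $\ell$. This is more involved than your proposed discriminant check, which as stated (``verify the discriminant is a nonzero polynomial in the parameters'') is not a proof but a computation that would have to be carried out degree by degree; the Bertini argument works uniformly in $d$.

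Your outline for part~(2) is otherwise the natural adaptation of the paper's earlier proof for the original N-path (the Proposition immediately preceding the randomized construction): invertibility of $\randN_s^L$ for $s\in(0,1)$ makes the preimage of $\sR$ have codimension~$\ge 2$, so its projection has codimension~$\ge 1$. Combined with the endpoint smoothness from Theorem~\ref{Thm:SmoothRandomEndpoint}, this would indeed settle part~(2) for $n=2$. The paper, however, stops short of stating this conclusion and leaves the full conjecture open, presumably because the case $n>2$ (and perhaps some subtlety the authors did not wish to commit to) remains.

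For part~(1) you correctly diagnose the obstruction: the parameter space $\R[x]^e\times[0,1)$ has dimension $ne+1$, far below $\dim\sF_\R$, so transversality to $\sR$ is not automatic and must be argued pointwise for \emph{every} $p$. You do not close this gap, and neither does the paper. Your honest identification of this as ``the main obstacle'' matches the paper's decision to leave it as a conjecture.
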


Note that $e$ is chosen
minimally in the sense that if $e<d$ or $e<n$, the fixed endpoint
$\randN_0^{L}(p)=\randF_1^{L}(t^d)$ is no longer smooth or even strictly
hyperbolic. 

We will show that if $n=2$, then at least that endpoint
lies in $\sH^\circ$ for a generic choice of $L$. The
proof relies on Bertini's theorem in the following form.

\begin{Thm}[Bertini's theorem, extended form; {\cite[Thm.~4.1]{Kl}}] On an arbitrary ambient variety, if a linear system has no fixed components, then the general member has no singular points outside of the base locus of the system and of the singular locus of the ambient variety.
\end{Thm}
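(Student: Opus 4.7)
The plan is to reduce Bertini's claim to a dimension count on an incidence variety, using classical Bertini in projective space as input and generic smoothness to transport it across the rational map defined by the linear system. I describe the argument in characteristic zero, which is the setting relevant to the paper; the general formulation in Kleiman requires extra care for inseparability, which I flag at the end.

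\textbf{Setup and first reduction.} Let $X$ be the ambient variety of dimension $n$, $L$ the line bundle of the system, $V\subseteq H^0(X,L)$, $|V|=\P(V)$, $B=\mathrm{Bs}(V)$, and $U=X^{sm}\setminus B$. The sections in $V$ define a morphism $\phi_V\colon U\to\P(V^*)$, and for $D\in|V|$ corresponding to a hyperplane $H_D\subset\P(V^*)$ we have $D\cap U=\phi_V^{-1}(H_D)$. Set $Y=\ol{\phi_V(U)}\subset\P(V^*)$. The claim then becomes: for general $D\in|V|$, the preimage $\phi_V^{-1}(H_D)$ is smooth on $U$. Classical Bertini in $\P(V^*)$ gives that a general hyperplane $H$ meets $Y$ transversally at every smooth point of $Y$; generic smoothness in characteristic zero produces a dense open $U'\subseteq U$ on which $\phi_V$ is a smooth morphism with image in the smooth locus of $Y$; and since smoothness is preserved under pullback along a smooth morphism, $D\cap U'$ is smooth for general $D$.

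\textbf{Handling the non-smooth locus.} The remaining task is to rule out singular points of $D$ lying in $U\setminus U'$. Form the incidence variety $Z=\{(x,D)\in U\times|V|\:|\: x\in\mathrm{Sing}(D)\}$. For fixed $x\in U$, choose a local generator of $L$ at $x$; then $D$ is singular at $x$ if and only if its defining section lies in the kernel of the $1$-jet map $V\to L_x/\fm_x^2 L_x$, whose target has dimension $n+1$. Stratify $U$ by the rank of this jet map. On the open stratum of maximal rank $n+1$, each fiber of the first projection $Z\to U$ has codimension $n+1$ in $|V|$, so this stratum contributes a piece of dimension $\dim|V|-1$ to $Z$. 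On lower-rank strata, the image in $Y$ has smaller dimension, and the no-fixed-components hypothesis prevents $\phi_V$ from contracting an entire divisor of $X$; this is exactly the input that guarantees each lower-rank stratum still contributes at most $\dim|V|-1$ to $\dim Z$. Hence the projection $p_2(Z)\subseteq|V|$ is contained in a proper closed subvariety, and a general $D\in|V|$ has no singular points on $U$.

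\textbf{Main obstacle.} The delicate point is the dimension count on the lower-rank strata: the rank of the $1$-jet map can drop on closed subsets of $U$ of positive dimension, and one must verify that the accompanying drop in the image inside $Y$ compensates precisely. The no-fixed-components hypothesis is essential here, since without it an entire fixed divisor of $X$ could be collapsed by $\phi_V$ and force excess contribution to $Z$. In positive characteristic, the generic-smoothness step can fail outright, and Kleiman's proof replaces it by a separability analysis of the trace map on $\Omega^1_{X/Y}$ to conclude that the generic hyperplane section of $Y$ still pulls back to a generically reduced, generically smooth divisor on $U$.
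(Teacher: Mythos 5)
The paper does not prove this statement at all: it is quoted verbatim from Kleiman \cite[Thm.~4.1]{Kl}, with a pointer to Jouanolou \cite[Thm.~6.6.2]{Jo} for the proof. So the only question is whether your proposed argument would actually establish the theorem, and it would not as written: the second half (``Handling the non-smooth locus'') contains a genuine gap. First, the assertion that the no-fixed-components hypothesis ``prevents $\phi_V$ from contracting an entire divisor of $X$'' is false --- no fixed components only means the base locus has codimension $\ge 2$, and says nothing about the fibres of $\phi_V$. For instance, $V=\langle x^2,xy,y^2\rangle$ on $\P^2$ has base locus the single point $(0{:}0{:}1)$ and no fixed component, yet $\phi_V$ contracts every line through that point. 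Second, and more importantly, the dimension bound you need on the lower-rank strata, namely $\dim U_r\le r-1$ so that each stratum contributes at most $\dim|V|-1$ to $Z$, simply fails: in the same example the $1$-jet map has rank $r=2$ at \emph{every} point of the two-dimensional $U$, so that stratum contributes $\dim U_r+(\dim|V|-r)=2=\dim|V|$ to $\dim Z$. (Bertini still holds there because $Z\to|V|$ is not generically finite onto its image, but that is exactly what a pure dimension count cannot see.) You flag this step yourself as the ``main obstacle,'' and it is not a technicality one can wave through --- the method is unsound, not merely incomplete.

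The standard repair, which is essentially Kleiman's and Jouanolou's characteristic-zero argument, is to apply generic smoothness not to $\phi_V$ but to the second projection of the \emph{full} incidence variety $I=\{(x,D)\in U\times|V| : x\in D\}$. Since $U$ avoids the base locus, the first projection $I\to U$ is a projective bundle with fibre $\P^{\dim|V|-1}$, so $I$ is smooth (here you use that $U$ avoids $\mathrm{Sing}(X)$); generic smoothness applied to $I\to|V|$ then says the general fibre $D\cap U$ is smooth, with no stratification or rank bookkeeping needed. This subsumes both halves of your argument at once. Your closing remark about positive characteristic is apt but understated: there generic smoothness fails for $I\to|V|$ as well, which is why Kleiman's actual proof of the theorem in the stated generality requires the separability analysis you allude to.
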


\noindent For the proof, see also \cite[Thm.~6.6.2]{Jo}.

\begin{Thm}\label{Thm:SmoothRandomEndpoint}
  Let $n=2$. For all $d\ge 2$, the plane projective curve $\sV_\C(\randF_1^L(t^d))$ is smooth for
  a generic choice of $L\in\C[x,y]_1^d$.
\end{Thm}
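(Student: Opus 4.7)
The plan is to reduce to a $2$-dimensional linear system on $\P^2$ by fixing $\ell_1,\ldots,\ell_{d-1}$ and varying only the last linear form $\ell=\ell_d$, then invoke the extended Bertini theorem. Each operator $T^{\ell}_1$ acts as $\mathrm{id} + \ell\cdot\partial/\partial t$, and since the $\ell_i$ lie in $\C[x,y]_1$ and are thus independent of $t$, these operators commute pairwise. Hence one may write
\[
F_1^L(t^d) \;=\; Q + \ell \cdot Q_t,
\]
where $Q := T^{\ell_1}_1 \cdots T^{\ell_{d-1}}_1(t^d)$ depends only on the first $d-1$ linear forms and $Q_t := \partial Q/\partial t$. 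As $\ell$ ranges over $\C[x,y]_1$, this traces out a linear system $\sL\subset|\mathcal{O}_{\P^2}(d)|$ of projective dimension $2$, spanned by $Q$, $xQ_t$, and $yQ_t$. The affine family $\{Q+\ell Q_t : \ell\in\C[x,y]_1\}$ is a dense open subset of $\sL$, so ``generic $\ell$'' coincides with ``general member of $\sL$.''

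Next I would check the Bertini hypotheses. Since $Q(1,0,0)=1\neq 0$, the point $(1:0:0)$ lies on no member of $\sL$, so the base locus reduces to $\mathrm{Bs}(\sL)=\{Q=Q_t=0\}$. For a generic choice of $\ell_1,\ldots,\ell_{d-1}$ one has $\gcd(Q,Q_t)=1$, so $\sL$ has no fixed components and $\mathrm{Bs}(\sL)$ is finite. The extended Bertini theorem then yields that $F_1^L(t^d)=Q+\ell Q_t$ is smooth on $\P^2\setminus\mathrm{Bs}(\sL)$ for a generic choice of $\ell$. To cover the finitely many base points, observe that at $P\in\mathrm{Bs}(\sL)$ the gradient of $F_1^L(t^d)$ simplifies to $\nabla Q|_P+\ell(P)\,\nabla Q_t|_P$; for generic $\ell_1,\ldots,\ell_{d-1}$ at least one of the two vectors $\nabla Q|_P,\nabla Q_t|_P$ is nonzero at every base point, so the values of $\ell$ causing vanishing of this combined gradient at $P$ form a proper linear subvariety of $\C[x,y]_1$. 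Since $\mathrm{Bs}(\sL)$ is finite, avoiding these conditions simultaneously is a nonempty Zariski open condition on $\ell$, which together with the Bertini step gives smoothness everywhere on $\P^2$ for a generic $\ell$.

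The main obstacle is justifying the genericity assumptions on $\ell_1,\ldots,\ell_{d-1}$: coprimality of $Q$ and $Q_t$, and the gradient condition at base points. Writing $Q=tR$ with $R\in\C[t,x,y]$ of degree $d-1$ and nonzero $t$-constant coefficient $d!\,\sigma_{d-1}(\ell_1,\ldots,\ell_{d-1})$, coprimality of $Q$ and $Q_t$ reduces to squarefreeness of $R$, while the gradient condition reduces to smoothness of the plane curve $\{R=0\}$. Both are Zariski open conditions on $(\ell_1,\ldots,\ell_{d-1})$, and to see they are nonempty I would verify them via a single explicit choice — for instance $\ell_i = x+\zeta^i y$ with $\zeta$ a primitive $(d-1)$-th root of unity, for which $\prod_i(T-\ell_i) = (T-x)^{d-1} - y^{d-1}$ and a direct calculation identifies $R$ with a scalar multiple of a Laguerre polynomial, whose roots are classically known to be distinct.
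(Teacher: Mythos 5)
Your Bertini setup is essentially the same as the paper's — both turn the one remaining free linear form into a two-dimensional linear system $\{Q+\ell Q_t\}$ on $\P^2$ and apply the extended Bertini theorem, then treat the finitely many base points by hand. The difference is where the inductive structure sits, and that is exactly where your argument has a gap. The paper peels off the operators $T_1^{\ell_i}$ one at a time: it proves a single-step lemma (if $q$ is monic in $t$, $q\neq t$, and $\sV_\C(q)$ is smooth, then $T_1^\ell(t^kq)=t^{k-1}r_\ell$ with $\sV_\C(r_\ell)$ smooth for generic $\ell$) and iterates it starting from $q=1$. The crucial point is that the smoothness of $\sV_\C(q)$ carried over from the previous step is precisely what guarantees $\gcd(q,tq')=1$ — i.e.\ that the linear system has no fixed component and finite base locus — so each step's Bertini hypotheses come for free.

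Your proof instead freezes all of $\ell_1,\dots,\ell_{d-1}$ at once and applies Bertini only to the last form, which leaves you needing ``$R$ squarefree'' and ``$\{R=0\}$ smooth'' for generic $(\ell_1,\dots,\ell_{d-1})$, where $Q=tR$. You correctly identify this as the main obstacle, but the obstacle is essentially the statement of the theorem in degree $d-1$, so it cannot simply be asserted; it either needs the same induction the paper runs, or a genuine direct verification. Your proposed fix — the explicit family $\ell_i=x+\zeta^i y$ giving ``a Laguerre polynomial with distinct roots'' — is not carried out, and as written it conflates a univariate polynomial having simple roots with a ternary form defining a smooth plane curve. With this particular choice one in fact gets $R=S(t,x)\pm d!\,y^{d-1}$ for a binary form $S$, so smoothness of $\{R=0\}$ (for $d\ge 3$) does reduce to squarefreeness of $S$; but that reduction and the identification of $S$ with a classical orthogonal polynomial are both missing. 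Until the nonemptiness of your Zariski-open condition on $(\ell_1,\dots,\ell_{d-1})$ is actually established — by induction as in the paper, or by a completed explicit computation — the proof is incomplete.
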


\begin{proof}
  We first show the following: Let $q\in\C[t,x,y]$ be homogeneous and
  monic in $t$ with $\sV_\C(q)$ smooth, $q\neq t$. Let
  $\ell\in\C[x,y]_1$ and $k$ a positive integer, then $ T_1^\ell(t^kq)
  = t^kq + \ell(kt^{k-1}q + t^kq') = t^{k-1}\bigl((t+k\ell)q + t\ell
  q'\bigr)$, where $q'=\partial q/\partial t$. Put
\[
r_\ell = (t+k\ell)q + t\ell q'.
\] 
We claim that, for generic $\ell$, the variety $\sV_\C(r_\ell)$ is
smooth. To see this, consider $R= (t+u)q + tvq'\in\C[t,x,y,u,v]$. We
find $\partial R/\partial u=q$ and $\partial R/\partial v=tq'$, hence
the singular locus of the variety $\sV_\C(R)$ in $\P^4$ is contained
in $\sV_\C(q)\cap\sV_\C(tq')$. Consider the linear series on
$\sV_\C(R)$ defined by $u = k\ell$, $v=\ell$, $\ell\in\C[x,y]_1$. It
is basepoint-free (in particular without fixed components), since the
only base point on $\P^4$ is $(1:0:0:0:0)$, and that is not a point on
$\sV_\C(R)$. By Bertini's theorem as stated above, the variety
$\sV_\C(r_\ell)$ has no singular points outside the singular locus of
$\sV_\C(R)$ for generic $\ell\in\C[x,y]_1$. Thus we are left with
showing that, for any point $P\in\sV_\C(q)\cap\sV_\C(tq')$, we have
$(\nabla r_\ell)(P)\neq 0$ for generic $\ell\in\C[x,y]_1$. Since
$\sV_\C(q)$ is smooth by assumption, $q$ and $tq'$ are coprime in
$\C[t,x,y]$, hence the intersection $\sV_\C(q)\cap\sV_\C(tq')$ in
$\P^2$ is
finite. If $P$ is any of these intersection points, suppose first that
$q'(P)=0$, which 
implies either $(\partial q/\partial x)(P)\neq 0$ or $(\partial
q/\partial y)(P)\neq 0$, since $\sV_\C(q)$ is smooth. Suppose
$a=(\partial q/\partial x)(P)\neq 0$ and put $b=(\partial q'/\partial
x)(P)$, then
\begin{align*}
(\partial r_\ell/\partial x)(P) &= \bigl(t(P)+k\ell(P)\bigr)a + t(P)\ell(P)b\\
  & = (ka+bt(P))\ell(P) + at(P)
\end{align*}
If $ka\neq -bt(P)$, there is at most one value $\ell(P)$ for which
$(\partial r_\ell/\partial x)(P)=0$. Otherwise, if $ka=-bt(P)$, then $at(P)\neq 0$, so
$(\partial r_\ell/\partial x)(P)\neq 0$. The case $(\partial
q/\partial x)(P)=0$ and $(\partial q/\partial y)(P)\neq 0$ is
analogues. Finally, if $q'(P)\neq 0$, then we must have $t(P)=0$, hence $(\partial r_\ell/\partial
t)(P) = (k+1)\ell(P)q'(P)$ is non-zero, provided that $\ell(P)\neq 0$.

Thus we have shown that $\sV_\C(r_\ell)$ is smooth for generic
$\ell$. To prove the original claim, let $L\in\C[x,y]_1^d$ and
consider
\[
\randF_1^L(t^d)=T_1^{\ell_1}\cdots T_1^{\ell_d}(t^d).
\]
Applying the above, with $k=d$ and $q=1$, shows that
$T_1^{\ell_d}(t^d)$ is of the form $t^{d-1}q$, and $\sV_\C(q)$ is
smooth for generic $\ell_d$. The claim now follows by induction. 
\end{proof}

\section{Algorithm and implementation}\label{Sec:implementation}

Given a hyperbolic polynomial $p\in \sH$, the N-path $N_s(p)$ connects $p=N_1(p)$ with $p_0=N_0(p)$ which does not depend on $p$. This suggests the following algorithm to compute a determinantal representation $(D_p,R_p)\in \sM_\R$ for $p$:
\begin{enumerate}
\item Pick $(D_q,R_q)\in \sM_\R$ giving a strictly hyperbolic
  polynomial $q=\Phi(D_q,R_q)$. Track the homotopy path $N_s(q)$ from
  $s=1$ with the {\em start} solution $(D_q,R_q)$ to $s=0$ producing
  the {\em target} solution $(D_{p_0},R_{p_0})$.  Then $p_0 =
  \Phi(D_{p_0},R_{p_0})$.
\item Track the homotopy path $N_s(p)$ from $s=0$ with the start
  solution $(D_{p_0},R_{p_0})$ to $s=1$ to obtain $(D_p,R_p)$ such
  that $p = \Phi(D_p,R_p)$.
\end{enumerate}

In principle, the first step only has to be performed once
in each degree $d$. In what follows we describe two ways to set up a
polynomial homotopy continuation for the pullback of an N-path.

\subsection{N-path in the monomial basis}
One way is to take the coefficients of the polynomial
$\Phi(D,R)-N_s(p)\in \C[D,R,s][t,x,y]$ with respect to the monomial
basis of $\sF$. This gives a family of square (\#equations=\#unknowns)
systems of polynomial equations in $\C[D,R]$ parametrized by $s$.

Then this family is passed to a homotopy continuation software package
(we use {\sc NAG4M2}~\cite{Leykin:NAG4M2}). As long as $s\in\C$
follows a path that ensures that $N_s(p)$ stays in $\sH^\circ\subset
\sH$, there are no singularities on the homotopy path except, perhaps,
at the target system (see discussion in Section~\ref{Sec:N-path}).

The bottleneck of this approach is the expansion of the determinant in the expression $\Phi(D,R)$ and evaluation of its $(t,x,y)$-coefficients: it takes $\Theta(d!)$ operations and results in an expression with $\Theta(d!)$ terms. This limits us to $d\leq 5$ in the current implementation of this approach.

\subsection{N-path with respect to a dual basis}\label{subsection:dual-basis}
While it may seem that picking a basis of $\sF$ different from the monomial one does not bring any advantage, it turns out to be crucial for practical computation in case of larger $d$.

We fix a {\em dual basis} in $\sF^*$ consisting of $m=\dim\sF$
evaluations $e_i$ at general points $(t_i,x_i,y_i)\in \C^3$, for
$i=1,\ldots,m$. The current implementation generates the points with
coordinates on the unit circle in $\C$ at random.

Now the family of polynomial systems to consider is
$$h_i=e_i(\Phi(D,R)-N_s(p)) \in \C[D,R,s],\ i=1,\ldots,m.$$ Since
$e_i(\Phi(D,R)) = \det(It_i+Dx_i+Ry_i)$, the evaluation of $h_i$ and
its partial derivatives costs $O((\dim \sF)^3) = O(d^6)$. 
 
Evaluation of the (unexpanded) expression $\Phi(D,R)$ and its partial derivatives
is much faster than expanding it in the monomial basis. The latter
costs $\Theta(d!)$ in the worst case and, in addition, numerical tracking
procedures would still need to evaluate the large expanded expression and
its partial derivatives.

We modified the {\sc NAG4M2} implementation of evaluation circuits,
which can be written as straight-line programs, to include taking a
determinant as an atomic operation.

\section{Example}\label{section:example}
The last improvement in the implementation allows us to compute
examples for larger $d$. With an implementation of the homotopy
tracking in arbitrary precision arithmetic, we see no obstacles to
computing determinantal representations for $d$ in double digits.

To give an example, we choose the sextic
\begin{align*}
p =& -36 x^6 - 157 x^4 y^2 - 20 x^3 y^3 - 109 x^2 y^4 + 246 x y^5 -
 92 y^6 - 12 x^3 y^2 t + 90 x^2 y^3 t\\
&+ 10 x y^4 t + 76 y^5 t +
 49 x^4 t^2 + 156 x^2 y^2 t^2 - 16 x y^3 t^2 + 132 y^4 t^2 +
 12 x y^2 t^3\\
&- 14 y^3 t^3 - 14 x^2 t^4 - 27 y^2 t^4 + t^6.
\end{align*}
The polynomial $p$ is hyperbolic, since $p=\Phi(D,R)$ with
\begin{equation} \label{eq:d6-example}
D=\Diag(-3,-2,-1,1,2,3), \ \
R=
\begin{bmatrix}
 0 & 1 & -1 & 1 & 2 & 1 \\
 1 & 0 & -1 & -2 & 1 & -1 \\
 -1 & -1 & 0 & 1 & 2 & 1 \\
 1 & -2 & 1 & 0 & -1 & 1 \\
 2 & 1 & 2 & -1 & 0 & -2 \\
 1 & -1 & 1 & 1 & -2 & 0
\end{bmatrix}.
\end{equation}

Assuming this pair $(D,R)$ is not known, let us describe the
application of our algorithm to recover a determinantal representation
of $p$; one can reproduce the following results by running lines in
{\tt showcase.m2}~\cite{Leykin-Plaumann:wwwVCN}. First, taking an
arbitrary pair $(D_q,R_q)$ and tracking the N-path $N_s(q)$ from the
strictly hyperbolic polynomial $q = \Phi(D_q,R_q) = N_1(q)$ to the
fixed polynomial $p_0 = N_0(q)$, we get
\begin{align*}
 D_{p_0} &= \Diag(.222847, 1.18893, 2.99274, 5.77514, 9.83747, 15.9829)\\
 R_{p_0} & = \begin{bmatrix}6&
      2.51352&
      1.19571&
      4.04309&
      1.42786&
      {-1.98597}\\
      2.51352&
      6&
      3.08656&
      .468873&
      2.38468&
      1.05948\\
      1.19571&
      3.08656&
      6&
      .785785&
      4.66027&
      2.29433\\
      4.04309&
      .468873&
      .785785&
      6&
      1.6226&
      .933245\\
      1.42786&
      2.38468&
      4.66027&
      1.6226&
      6&
      3.50198\\
      {-1.98597}&
      1.05948&
      2.29433&
      .933245&
      3.50198&
      6\\
      \end{bmatrix}
\end{align*}

\noindent Tracking the N-path $N_s(p)$ from $p_0 = N_0(p) = N_0(q)$ to $p = N_1(p)$, we obtain
\begin{align*}
 D' &= \Diag(-3,-2,-1,1,2,3)\\
R' &= \begin{bmatrix}0&
      .596508&
      {-1.43241}&
      2.00316&
      1.10471&
      {-.725394}\\
      .596508&
      0&
      .739773&
      1.79407&
      .0604427&
      {-1.60948}\\
      {-1.43241}&
      .739773&
      0&
      1.56816&
      1.66137&
      {-.165953}\\
      2.00316&
      1.79407&
      1.56816&
      0&
      .839374&
      2.00885\\
      1.10471&
      .0604427&
      1.66137&
      .839374&
      0&
      1.57679\\
      {-.725394}&
      {-1.60948}&
      {-.165953}&
      2.00885&
      1.57679&
      0\\
      \end{bmatrix}
\end{align*}
which is an alternative determinantal representation of $p$. While we
returned to the same point $p$ in the base of the cover $\Phi$, the
route taken has led us to a different sheet than the sheet of the
fiber point $(D,R)\in \Phi^{-1}(p)$ in (\ref{eq:d6-example}) used to
construct this example.

With the default settings of {\sc NAG4M2}, the homotopy tracking algorithm takes 28 steps on the first path and 15 steps on the second. We were not able to find a determinantal representation for this example trying to solve the system $p=\Phi(D,R)$ directly. This is in line with what is reported in~\cite{PSV}: the largest examples that the general solvers could compute with this na\"ive strategy are in degree $d=5$.

\begin{Remark}\label{Remark:N-path-comparison}
The following is a table of experiments that can be reproduced using the examples posted at~\cite{Leykin-Plaumann:wwwVCN}. 
Note that the paths produced by the original and randomized strategies for the same problem are \emph{different paths} and some random choices are made even in the algorithm that follows the original (not randomized) N-path; see~\S\ref{subsection:dual-basis}.  
\def\fail{\text{fail}}
$$
\begin{array}{|c|c|c|c|c|c|}
\hline
d&m=\dim\sF&\text{randomized?}&\text{precision(bits)}&\#\text{steps}&\text{time(seconds)}\\
\hline
6 &    27          & no      &   53     &     22        &   1161      \\ 
6 &    27          & yes     &   53     &     24        &   1326      \\ 
\hdashline
6 &    27          & no      &   53     &  \fail        &          \\ 
6 &    27          & no      &   100    &    38         &   1870       \\ 
6 &    27          & yes     &   53     &    39         &   2098       \\ 
\hline
7 &    35          & no      &   53     &     \fail      &            \\ 
7 &    35          & no      &   100    &     42         &   9273     \\ 
7 &    35          & yes     &   53     &     37         &   7315     \\ 
\hline
8 &    44          & no      &   53     &     27         &   18173     \\ 
8 &    44          & yes     &   53     &     22         &   12091     \\ 
\hline
9 &    54          & no      &   53     &  \fail         &             \\ 
9 &    54          & no      &   100    &     43         &  60692      \\ 
9 &    54          & yes     &   53     &     38         &  43410      \\ 
\hline
10&    65          & no      &   53     &  \fail         &             \\ 
10&    65          & no      &   100    &  \fail         &             \\ 
10&    65          & yes     &   53     &     36         &  163744
\\ 
\hline
\end{array}
$$
Our empirical conclusion is that using the randomized N-path instead of the original N-path allows computing determinantal representations for larger examples than the original N-path. 

The main underlying reason that may explain the observed lower average practical complexity, in our opinion, is that $\deg_s \randN_s^{L}(p) = d$ while $\deg_s N_s(p) = 2d$, where $d=\deg p$.      

One other reason for a path with lower degree in $s$ having better properties on average is better average conditioning: one may observe that higher precision was needed to finish with the original strategy for several examples. The same increase in precision did not help in our example for $d=10$. 

That said, condition numbers for the fixed end ($s=0$ in Proposition~\ref{Prop:Nuij}) of the original N-path can be better than the correspoding end of the random N-path: the latter depends on the random choices and can be significantly worse if unlucky. We provide two examples for $d=6$ (separated by a dashed line): if the original strategy does not fail then it is even slightly faster than the randomized one. The former is faster near one end of the path ($s=0$), while the later is faster near the other end ($s=1$).      
\end{Remark}

{\small\linespread{1}

}

\end{document}